\def\today{\ifcase \month \or
   January \or February \or March \or April \or
   May \or June \or July \or August \or
   September \or October \or November \or December \fi
   \space\number\day , \number\year}
  \newcommand\@dotsep{4.5}
  \def\@tocline#1#2#3#4#5#6#7{\relax
     \ifnum #1>\c@tocdepth 
     \else
     \par \addpenalty\@secpenalty\addvspace{#2}%
     \begingroup \hyphenpenalty\@M
     \@ifempty{#4}{%
     \@tempdima\csname r@tocindent\number#1\endcsname\relax
        }{%
         \@tempdima#4\relax
           }%
      \parindent\z@ \leftskip#3\relax \advance\leftskip\@tempdima\relax
      \rightskip\@pnumwidth plus1em \parfillskip-\@pnumwidth
       #5\leavevmode\hskip-\@tempdima #6\relax
       \leaders\hbox{$\m@th
       \mkern \@dotsep mu\hbox{.}\mkern \@dotsep mu$}\hfill
       \hbox to\@pnumwidth{\@tocpagenum{#7}}\par
       \nobreak
        \endgroup
         \fi}
 \newtheorem{thm}{Theorem}[section]
 \newtheorem{cor}[thm]{Corollary}
 \newtheorem{lem}[thm]{Lemma}
 \newtheorem{prop}[thm]{Proposition}
 \theoremstyle{definition}
 \newtheorem{defn}[thm]{Definition}
 \theoremstyle{remark}
 \newtheorem{rem}[thm]{Remark}
 \newtheorem{ex}[thm]{Example}
 \numberwithin{equation}{section}
\newcommand{\1}{{\bf 1}}
\newcommand{\Ad}{{\rm Ad}}
\newcommand{\Ci}{{\mathcal C}^\infty}
\newcommand{\de}{{\rm d}}
\newcommand{\ee}{{\rm e}}
\newcommand{\End}{{\rm End}\,}
\newcommand{\ie}{{\rm i}}
\newcommand{\Ker}{{\rm Ker}\,}
\newcommand{\Ran}{{\rm Ran}\,}
\newcommand{\CC}{{\mathbb C}}
\newcommand{\HH}{{\mathbb H}}
\newcommand{\RR}{{\mathbb R}}
\newcommand{\TT}{{\mathbb T}}
\newcommand{\Ac}{{\mathcal A}}
\newcommand{\Bc}{{\mathcal B}}
\newcommand{\Fc}{{\mathcal F}}
\newcommand{\Hc}{{\mathcal H}}
\newcommand{\Oc}{{\mathcal O}}
\newcommand{\Vc}{{\mathcal V}}
\renewcommand{\gg}{{\mathfrak g}}
\newcommand{\hg}{{\mathfrak h}}
\begin{document}

\title[Representations of nilpotent Lie groups]
 {Representations of nilpotent Lie groups via measurable dynamical systems}

\author[I. Belti\c t\u a]{Ingrid Belti\c t\u a}

\address{%
Institute of Mathematics\\ 
``Simion Stoilow''\\ 
of the Romanian Academy\\
P.O. Box 1-764\\
Bucharest\\
Romania}

\email{ingrid.beltita@gmail.com
}

\thanks{This research has been partially supported by the Grant
of the Romanian National Authority for Scientific Research, CNCS-UEFISCDI,
project number PN-II-ID-PCE-2011-3-0131.}

\author[D. Belti\c t\u a]{Daniel Belti\c t\u a}

\address{%
Institute of Mathematics\\ 
``Simion Stoilow''\\ 
of the Romanian Academy\\
P.O. Box 1-764\\
Bucharest\\
Romania}

\email{beltita@gmail.com
}

\subjclass{Primary 22E27; Secondary 22E25, 22E66, 28D15}

\keywords{dynamical system, ergodic action, semidirect product}

\date{18 October 2015}

\begin{abstract}
We study unitary representations associated to cocycles of 
measurable dynamical systems. 
Our main result establishes conditions on a cocycle,  
ensuring that ergodicity of the dynamical system under consideration 
is equivalent to irreducibility of its corresponding unitary representation. 
This general result is applied to some representations of finite-dimensional nilpotent Lie groups 
and to some representations of infinite-dimensional Heisenberg groups. 
\end{abstract}

\maketitle

\section{Introduction}

A measurable dynamical system is a measure space $(X,\mu)$ 
endowed with a group action on the right 
$X\times S\to X$, $(x,s)\mapsto x.s$, 
for which the measure~$\mu$ is quasi-invariant, 
hence $\de\mu(x.s)=j(x,s)\de\mu(x)$ for a suitable a.e. defined positive measurable function $j(\cdot,s)$ on $X$.  
A scalar cocycle of this measurable dynamical system is a family $\{a(\cdot, s)\}_{s\in S}$ 
of a.e. defined measurable functions on $X$ with values in the unit circle~$\TT$, 
for which the map $\pi_a\colon S\to \Bc(L^2(X,\mu))$ is a unitary representation, where 
$$\pi_a(s)\colon L^2(X,\mu)\to L^2(X,\mu),\quad 
(\pi_a(s)\varphi)(x)=j(x,s)^{1/2}a(x,s)\varphi(x.s).$$
Our main abstract result is Theorem~\ref{quasi_th} which 
establishes conditions on the cocycle~$a$,  
ensuring that ergodicity of the above dynamical system  
is equivalent to irreducibility of the unitary representation~$\pi_a$. 
The unifying force of this result is then illustrated by a variety of applications, 
including unitary irreducible representations 
of finite-dimensional nilpotent Lie groups 
and some representations of infinite-dimensional Heisenberg groups. 

\subsection*{Some preliminaries on measure theory}

\begin{lem}\label{masa}
Let $(X,\mu)$ be any measure space and $\Hc:=L^2(X,\mu)$. 
For any $\psi\in L^\infty(X,\mu)$ let $M_\psi\in\Bc(\Hc)$ be the multiplication-by-$\psi$ operator, 
and define $\Ac:=\{M_\psi\mid \psi\in L^\infty(X,\mu)\}$. 
If at least one of the following conditions is satisfied:  
\begin{enumerate}
\item $X$ is a locally compact space and $\mu$ is a Radon measure; 
\item one has $\mu(X)<\infty$ and $\Hc$ is separable; 
\end{enumerate}
then $\Ac$ is a maximal abelian self-adjoint subalgebra of $\Bc(\Hc)$. 
\end{lem}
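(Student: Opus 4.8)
The algebra $\Ac$ is abelian and self-adjoint because $M_\psi M_\phi=M_{\psi\phi}=M_\phi M_\psi$ and $M_\psi^*=M_{\bar\psi}$, so the entire content of the statement is the maximality. Recall that a self-adjoint subalgebra of $\Bc(\Hc)$ is maximal abelian if and only if it coincides with its own commutant; since $\Ac$ is abelian one always has $\Ac\subseteq\Ac'$, and because $\Ac$ is self-adjoint so is $\Ac'$, so the task reduces to proving that every $T\in\Ac'$ has the form $M_g$ for some $g\in L^\infty(X,\mu)$ (splitting $T$ into its real and imaginary parts if one prefers to argue with self-adjoint operators only). The plan is thus to manufacture the symbol $g$ out of $T$, to check that it is essentially bounded with $\|g\|_\infty\le\|T\|$, and to identify $T$ with $M_g$ on a dense subspace.

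First I would treat case (2), where the finiteness $\mu(X)<\infty$ puts the constant function $\1$ in $\Hc$. Setting $g:=T\1\in L^2(X,\mu)$ and using that $T$ commutes with every $M_\psi$, one gets $T\psi=TM_\psi\1=M_\psi T\1=\psi g$ for all $\psi\in L^\infty(X,\mu)$. To see that $g$ is bounded, fix $c>\|T\|$, let $E:=\{x\mid |g(x)|>c\}$, and test this identity on $\psi=\1_E$: from $c^2\mu(E)\le\int_E|g|^2\,\de\mu=\|T\1_E\|^2\le\|T\|^2\mu(E)$ together with $\mu(E)<\infty$ one forces $\mu(E)=0$, whence $\|g\|_\infty\le\|T\|$. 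Since $L^\infty(X,\mu)$ is dense in $L^2(X,\mu)$ when $\mu$ is finite, the bounded operators $T$ and $M_g$ agree on a dense subspace and hence coincide, so $T\in\Ac$; separability of $\Hc$ serves only to keep all approximation arguments sequential.

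The harder case is (1), where $\mu$ may be infinite (indeed $(X,\mu)$ need not even be $\sigma$-finite, e.g. an uncountable discrete space with counting measure), so $\1\notin\Hc$ and no single cyclic vector is available. Here I would localize: for every compact $K\subseteq X$ one has $\mu(K)<\infty$, so $\1_K\in\Hc$, and $g_K:=T\1_K$ is supported in $K$ and satisfies $T(\psi\1_K)=\psi g_K$ for $\psi\in L^\infty(X,\mu)$. The family $\{g_K\}$ is consistent, meaning $g_{K'}\1_K=g_K$ whenever $K\subseteq K'$, so using the inner regularity of the Radon measure---more precisely its strict localizability, which lets one write $X$ as a disjoint union of finite-measure measurable sets up to a locally null set---these local symbols patch to a single measurable $g$ with $g|_K=g_K$. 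The superlevel-set estimate carried out on each compact set as above yields $\|g\|_\infty\le\|T\|$, and since compactly supported functions are dense in $L^2(X,\mu)$ for a Radon measure, the equality $T=M_g$ follows exactly as before.

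The main obstacle is this patching step in case (1): making sense of a global essentially bounded symbol $g$ when $X$ is not $\sigma$-compact, and justifying both the a.e.\ bound over all of $X$ and the density of compactly supported functions. This is where the Radon hypothesis does the real work, through the inner regularity and strict localizability of $\mu$; once $g$ is in hand, the identification $T=M_g$ is the same soft density argument already used in case (2).
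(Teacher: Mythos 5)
Your proposal is correct in substance, but it takes a different route from the paper in the sense that the paper proves nothing directly: it disposes of case (1) by citing Dixmier's theorem on the multiplication algebra of a Radon measure, and of case (2) by observing that $\1\in L^\infty(X,\mu)\subseteq\Hc$ is a cyclic vector for $\Ac$ and invoking the standard fact that an abelian self-adjoint algebra with a cyclic vector is maximal abelian. Your case (2) argument is precisely the proof of that cited fact, written out in full: $g:=T\1$, the identity $T\psi=\psi g$, the superlevel-set estimate giving $\Vert g\Vert_\infty\le\Vert T\Vert$, and density of $L^\infty$ in $L^2$ for a finite measure; this is complete and correct (and, as you note, separability of $\Hc$ plays no real role in it). Your case (1) argument replaces the Dixmier citation by a localization scheme, and here you have correctly identified both the right local objects ($g_K=T\1_K$ with the consistency relation $g_{K'}\1_K=g_K$) and the genuine difficulty, namely patching the $g_K$ into one measurable, essentially bounded symbol when $X$ is not $\sigma$-compact. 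That patching via a decomposition of $X$ into disjoint finite-measure pieces modulo a locally null set is exactly the content of the theorem the paper cites, so your sketch is the ``inlined'' version of the paper's reference rather than a divergence from it; the one point worth making explicit if you carry it out is that for a non-$\sigma$-finite Radon measure the algebra $L^\infty(X,\mu)$ and the bound on $g$ must be understood modulo \emph{locally} null sets (as you hint), since otherwise $\Ac$ need not even equal its commutant. What your approach buys is a self-contained proof and a clear view of where each hypothesis is used; what the paper's approach buys is brevity.
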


\begin{proof}
If the first condition is satisfied then the assertion follows by \cite[Ch.\ I, \S 7, no.\ 3, Th.\ 2]{Di69}. 
If the second condition is satisfied, then the constant function $1\in L^\infty(X,\mu)\subseteq L^2(X,\mu)$ 
is a cyclic vector for $\Ac$, hence the conclusion follows by \cite[Th. 2.3.4]{SS08}. 
\end{proof}

\begin{defn}\label{erg_def}
\normalfont
Let $\alpha\colon G\times X\to X$ be any group action by measurable transformations of a measure space $(X,\mu)$. 
The action $\alpha$ is called \emph{ergodic} if for every measurable set $A\subseteq X$ 
with $\mu((A\, \triangle \, \alpha_g(A) )=0$ for all $g\in G$, one has either $\mu(A)=0$ or $\mu(X\setminus A)=0$. 
(Here, for two sets $X$ and $Y$, $X\,  \triangle\,  Y$ denotes the symmetric difference $X \,  \triangle\, Y= (X\setminus Y)\cup (Y\setminus X)$.)
\end{defn}

\begin{rem}\label{erg_rem}
\normalfont
In the framework of Definition~\ref{erg_def} it is straightforward to check that 
the group action~$\alpha$ is ergodic if and only if the equivalence classes of a.e. 
constant functions in $L^\infty(X,\mu)$ are the only elements 
$\varphi\in L^\infty(X,\mu)$ 
with $\varphi\circ\alpha_g=\varphi$ for all $g\in G$. 
This implies that if $\alpha$ is a transitive action 
(or more generally, if for every $x\in X$ with its orbit $G.x:=\{\alpha_g(x)\mid g\in G\}$ 
one has $\mu(X\setminus G.x)=0$), 
then $\alpha$ is ergodic. 
 
We refer to \cite{Ta03} for the role of ergodic actions in the theory of operator algebras. 
\end{rem}

\section{General results}

To begin with, we recall some ideas from \cite[Ch. I, Subsect.\ 1.4]{Is96}. 

\begin{defn}\label{quasi_def}
\normalfont
A \emph{measurable dynamical system} consists of a measure space $(X,\mu)$ 
endowed with a group action on the right 
$$\beta\colon X\times S\to X, \quad (x,s)\mapsto\beta_s(x)=:x.s,$$ 
for which the measure~$\mu$ is quasi-invariant. 
Then for every $s\in S$ there is an a.e. defined positive function $j(\cdot,s)$ on $X$ 
for which $(\beta_s)_*(\mu)=j(\cdot,s)\mu$, 
where $(\beta_s)_*(\mu)$ denotes the pushforward of the measure~$\mu$ through the map~$\beta_s$. 
Hence for every measurable set $E\subseteq X$ one has 
$$\mu(\beta_s(E))=\int\limits_E j(x,s)\de\mu(x). $$
A \emph{scalar cocycle} of this measurable dynamical system is a family $\{a(\cdot, s)\}_{s\in S}$ 
consisting of a.e. defined measurable functions on $X$ with values in the unit circle~$\TT$, 
satisfying the conditions 
$$a(x,s_1s_2)=a(x,s_1)a(x.s_1,s_2)\text{ and }a(x,\1)=x$$
for a.e. $x\in X$ and all $s_1,s_2\in S$. 

In the above setting we also define $\Hc:=L^2(X,\mu)$ 
and for every $s\in S$, 
$$\pi_a(s)\colon \Hc\to\Hc,\quad 
\pi_a(s)\varphi=j(\cdot,s)^{1/2}a(\cdot,s)(\varphi\circ\beta_s)(\cdot).$$
\end{defn}

\begin{rem}
\normalfont
In Definition~\ref{quasi_def}, since $(\beta_{s_1s_2})_*(\mu)=(\beta_{s_1})_*((\beta_{s_2})_*(\mu))$ 
for all $s_1,s_2\in S$, it is easily checked that the family $\{a(\cdot, s)\}_{s\in S}$ 
satisfies the conditions of a scalar cocycle, except that the functions from this family 
take values in the multiplicative group $(0,\infty)$ instead of the unit circle~$\TT$. 
\end{rem}

The following result is a special case of \cite[Ch. I, Props.\ 1.1--1.2]{Is96} 
whose proof was not included therein, so we give the sketch of a proof here, 
for the sake of completeness. 

\begin{prop}\label{quasi_prop}
Assume the setting of Definition~\ref{quasi_def}. 
Then the following assertions hold: 
\begin{enumerate}[(i)]
\item\label{quasi_prop_item1} 
The map $\pi_a\colon S\to \Bc(\Hc)$ is a unitary representation. 
\item\label{quasi_prop_item2} 
If the representation $\pi_a$ is irreducible, then the action of $S$ on $X$ is ergodic. 
\item\label{quasi_prop_item3}  
If $S$ is a topological group and  one has  
$$\lim\limits_{s\to\1}\mu(E\, \triangle\, (E.s))
=\lim\limits_{s\to\1}\int\limits_{E\, \triangle\, (E.s)}\vert j(\cdot,s)^{1/2}-1\vert^2\de\mu
=\lim\limits_{s\to\1}\int\limits_E (a(s,\cdot)-1)\de\mu=0$$ 
for every measurable set $E\subseteq X$ with $\mu(E)<\infty$, 
then the representation $\pi_a$ is continuous. 
\end{enumerate}
\end{prop}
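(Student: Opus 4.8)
The plan is to treat the three assertions in turn, the first two being short algebraic/functional-analytic checks and the third carrying essentially all of the analytic content.

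For part~\eqref{quasi_prop_item1} I would first check that each $\pi_a(s)$ is an isometry. For $\varphi,\psi\in\Hc$ one computes $\langle\pi_a(s)\varphi,\pi_a(s)\psi\rangle=\int_X j(x,s)\,|a(x,s)|^2\,\varphi(x.s)\,\overline{\psi(x.s)}\,\de\mu(x)$, and since $|a(\cdot,s)|\equiv 1$ together with $(\beta_s)_*(\mu)=j(\cdot,s)\mu$, the change-of-variables formula $\int_X (f\circ\beta_s)\,j(\cdot,s)\,\de\mu=\int_X f\,\de\mu$ collapses this to $\langle\varphi,\psi\rangle$. Next I would establish the homomorphism property. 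As noted in the Remark following Definition~\ref{quasi_def}, the positive weight obeys the same cocycle identity as $a$, namely $j(x,s_1s_2)=j(x,s_1)\,j(x.s_1,s_2)$ a.e.\ (this comes from $(\beta_{s_1s_2})_*(\mu)=(\beta_{s_1})_*((\beta_{s_2})_*(\mu))$ and the chain rule for Radon--Nikodym derivatives). Multiplying the two cocycle identities and comparing with the defining formula for $\pi_a$ yields $\pi_a(s_1)\pi_a(s_2)=\pi_a(s_1s_2)$ at once; finally $a(\cdot,\1)\equiv 1$ and $\beta_\1=\id_X$ give $\pi_a(\1)=\id_\Hc$, so each $\pi_a(s)$ is an invertible isometry, hence unitary.

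For part~\eqref{quasi_prop_item2} I would argue by contraposition, showing that failure of ergodicity forces reducibility of $\pi_a$. If the action is not ergodic, then by Remark~\ref{erg_rem} there is some $\varphi\in L^\infty(X,\mu)$ which is not a.e.\ constant and satisfies $\varphi\circ\beta_s=\varphi$ a.e.\ for every $s\in S$. A direct computation with the defining formula shows $M_\varphi\pi_a(s)=\pi_a(s)M_\varphi$: the factor $j(\cdot,s)^{1/2}a(\cdot,s)$ cancels and one is left exactly with the condition $\varphi\circ\beta_s=\varphi$. Thus $M_\varphi$ lies in the commutant $\pi_a(S)'$; since $\varphi$ is not a.e.\ constant, $M_\varphi$ is not a scalar multiple of $\id_\Hc$, so $\pi_a(S)'\ne\CC\,\id_\Hc$ and $\pi_a$ is reducible.

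For part~\eqref{quasi_prop_item3}, since $\pi_a$ is a unitary representation it suffices to establish strong operator continuity, and by the group law this reduces to continuity at $s=\1$, i.e.\ $\|\pi_a(s)\varphi-\varphi\|\to 0$ as $s\to\1$ for every $\varphi\in\Hc$. As the operators are unitary, hence uniformly bounded, it is enough to verify this on the dense set of simple integrable functions, and by linearity on indicators $\varphi=\chi_E$ with $\mu(E)<\infty$. Using unitarity I would write $\|\pi_a(s)\chi_E-\chi_E\|^2=2\mu(E)-2\Re\langle\pi_a(s)\chi_E,\chi_E\rangle$, and since $\chi_E\circ\beta_s=\chi_{E.s^{-1}}$ the inner product equals $\int_{E\cap(E.s^{-1})}j(\cdot,s)^{1/2}a(\cdot,s)\,\de\mu$; the goal is then to show this tends to $\mu(E)$. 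Writing $j^{1/2}a-1=(j^{1/2}-1)+(a-1)+(j^{1/2}-1)(a-1)$ and decomposing the domain along $E\cap(E.s^{-1})$ and $E\triangle(E.s^{-1})$, I would control the $a$-contribution by the third hypothesis via Cauchy--Schwarz and $|a-1|^2=2-2\Re a$, the overlap measure $\mu(E\cap(E.s^{-1}))\to\mu(E)$ by the first hypothesis, and the weight terms by the second hypothesis together with the identity $\int_{E.s^{-1}}j(\cdot,s)\,\de\mu=\mu(E)$ coming from quasi-invariance.

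I expect the only genuine difficulty to sit inside part~\eqref{quasi_prop_item3}, and specifically in the handling of the weight $j(\cdot,s)^{1/2}$ on the overlap region $E\cap(E.s^{-1})$: the $a$-factors and the pure symmetric-difference contributions are dispatched directly by Cauchy--Schwarz and the stated hypotheses, whereas estimating $\int_{E\cap(E.s^{-1})}(j(\cdot,s)^{1/2}-1)\,\de\mu$ requires combining the $L^2$-smallness of $j^{1/2}-1$ on the symmetric difference with the measure-preservation identities furnished by the quasi-invariance. This bookkeeping — keeping the weighted and unweighted pieces aligned with the correct one of the three limiting hypotheses, and tracking the passage between $s$ and $s^{-1}$ — is the one step where care is needed; everything else is formal.
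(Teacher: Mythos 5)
Your parts \eqref{quasi_prop_item1} and \eqref{quasi_prop_item2} are correct and match the paper's (very terse) proof: (i) is the same change-of-variables computation together with the cocycle identity for $j$, and your commutant argument in (ii) is just the general-$\varphi$ version of the paper's observation that $M_{\chi_A}$ is a $\pi_a$-invariant projection for an invariant set $A$. For \eqref{quasi_prop_item3} your overall strategy (uniform boundedness plus density, reduction to $\chi_E$ with $\mu(E)<\infty$, then the identity $\Vert\pi_a(s)\chi_E-\chi_E\Vert^2=2\mu(E)-2\Re\langle\pi_a(s)\chi_E,\chi_E\rangle$) is exactly what the paper does, and your treatment of the $a$-factor and of the overlap measure is fine.

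The one step you yourself flag as delicate is, however, not closed by what you describe. Writing $F:=E\cap(E.s^{-1})$, the hypotheses give you $\mu(F)\to\mu(E)$ and $\int_F j(\cdot,s)\,\de\mu=\mu(E\cap(E.s))\to\mu(E)$, and Cauchy--Schwarz then yields only the upper bound $\int_F j(\cdot,s)^{1/2}\de\mu\le\bigl(\mu(F)\int_F j(\cdot,s)\,\de\mu\bigr)^{1/2}\to\mu(E)$; the matching lower bound is equivalent to $\int_F\vert j(\cdot,s)^{1/2}-1\vert^2\de\mu\to 0$, and this does not follow from the two convergences above (take $j$ equal to $\varepsilon$ on half of $F$ and to $2-\varepsilon$ on the other half: then $\int_F j=\mu(F)$ but $\int_F(j^{1/2}-1)^2\approx(2-\sqrt2)\mu(F)$). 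Nor does the displayed hypothesis help as literally written, since it controls $\vert j(\cdot,s)^{1/2}-1\vert^2$ only on the symmetric difference $E\,\triangle\,(E.s)$, which is disjoint from the overlap region where you need it. What actually makes the estimate work is the condition $\int_E\vert j(\cdot,s)^{1/2}-1\vert^2\de\mu\to 0$ (integral over $E$ itself), which is surely what the statement intends and which immediately dominates the integral over $F\subseteq E$. So you should either invoke that corrected form of the hypothesis explicitly, or note that the statement as printed cannot support the step you describe; merely "combining the $L^2$-smallness on the symmetric difference with the measure-preservation identities" does not suffice.
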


\begin{proof} 
Assertion~\eqref{quasi_prop_item1} is based on a straightforward computation.  

For Assertion~\eqref{quasi_prop_item2} note that 
for every measurable set $E\subseteq X$ which is $G$-invariant, 
the multiplication operator $M_{\chi_E}\in\Bc(\Hc)$ is an orthogonal projection 
whose image is invariant under $\pi_a(s)$ for all $s\in S$. 

For Assertion~\eqref{quasi_prop_item3} we use that the values of $\pi_a$ are unitary operators on~$\Hc$, 
hence an $(\varepsilon/3)$-argument shows that it suffices to check that 
$\lim\limits_{s\to\1}\Vert\pi_a(s)\varphi-\varphi\Vert=0$ for $\varphi$ 
in some subset of $\Hc$ that spans a dense linear subspace. 
Using the assumptions, one can check that $\lim\limits_{s\to\1}\Vert\pi_a(s)\chi_E-\chi_E\Vert=0$ 
for every measurable set $E\subseteq X$ with $\mu(E)<\infty$, and this completes the proof. 
\end{proof}

For the following theorem we recall that a \emph{multiplicity-free representation} 
is a unitary representation whose commutant is commutative. 

\begin{thm}\label{quasi_th}
Assume the setting of Definition~\ref{quasi_def}, where $(X, \mu)$ satisfies either of the conditions in Lemma~\ref{masa},   
and let
$$S_0:=\{s\in S\mid x.s=x \text{ for a.e. } x\in X\}.$$  
If the set $\{a(\cdot,s)\mid s\in S_0\}$ spans a $w^*$-dense linear subspace of $L^\infty(X,\mu)$, 
then $\pi_a\colon S\to \Bc(\Hc)$ is a multiplicity-free representation and moreover the following assertions are equivalent: 
\begin{enumerate}[(i)]
\item The action of $S$ on $(X,\mu)$ is ergodic. 
\item The representation $\pi_a$ is irreducible. 
\end{enumerate}
\end{thm}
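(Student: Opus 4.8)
The plan is to exploit the elements of $S_0$, which act trivially on $X$ and therefore realize $\pi_a$ as a family of multiplication operators, in order to trap the commutant of $\pi_a(S)$ inside the maximal abelian subalgebra $\Ac=\{M_\psi\mid\psi\in L^\infty(X,\mu)\}$ furnished by Lemma~\ref{masa}. First I would note that for $s\in S_0$ one has $\beta_s=\id$ a.e., so $(\beta_s)_*(\mu)=\mu$ forces $j(\cdot,s)=1$ a.e., and hence
$$\pi_a(s)\varphi=a(\cdot,s)\varphi=M_{a(\cdot,s)}\varphi\qquad(s\in S_0),$$
that is $\pi_a(S_0)\subseteq\Ac$. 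Since $\psi\mapsto M_\psi$ is a normal $*$-isomorphism of $L^\infty(X,\mu)$ onto $\Ac$ carrying the $w^*$-topology to the $\sigma$-weak topology, the hypothesis that $\{a(\cdot,s)\mid s\in S_0\}$ spans a $w^*$-dense subspace says exactly that the $\sigma$-weakly closed linear span of $\{\pi_a(s)\mid s\in S_0\}$ is all of $\Ac$. Because the set $\{T\in\Bc(\Hc)\mid T\pi_a(s)=\pi_a(s)T\}$ is $\sigma$-weakly closed for each fixed $s$, any operator commuting with every $\pi_a(s)$, $s\in S_0$, commutes with all of $\Ac$; therefore
$$\pi_a(S)'\subseteq\{\pi_a(s)\mid s\in S_0\}'\subseteq\Ac'=\Ac,$$
the last equality expressing that $\Ac$ is maximal abelian. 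In particular $\pi_a(S)'$ is commutative, so $\pi_a$ is multiplicity-free.

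Next I would settle the equivalence. The implication (ii)$\Rightarrow$(i) is precisely Proposition~\ref{quasi_prop}\eqref{quasi_prop_item2}. For (i)$\Rightarrow$(ii), take $T\in\pi_a(S)'$; by the previous paragraph $T=M_\psi$ for some $\psi\in L^\infty(X,\mu)$. A direct computation gives, for every $s\in S$,
$$\pi_a(s)M_\psi=M_{\psi\circ\beta_s}\,\pi_a(s),$$
so that commutation of $M_\psi$ with the \emph{invertible} operator $\pi_a(s)$ yields $M_\psi=M_{\psi\circ\beta_s}$, i.e.\ $\psi\circ\beta_s=\psi$ a.e.\ for all $s\in S$. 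By the characterization of ergodicity in Remark~\ref{erg_rem}, this forces $\psi$ to be a.e.\ constant, whence $T\in\CC\cdot\id$. Thus $\pi_a(S)'=\CC\cdot\id$ and $\pi_a$ is irreducible.

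The one step I expect to require care, and the place where the hypotheses of Lemma~\ref{masa} are genuinely used, is the passage from $w^*$-density of the scalars $\{a(\cdot,s)\mid s\in S_0\}$ to the inclusion $\pi_a(S)'\subseteq\Ac$: it rests on $\Ac$ being a MASA together with the $\sigma$-weak closedness of commutants, so that commuting with a $w^*$-total family of multiplication operators is the same as commuting with every element of $\Ac$. Everything else is the standard conjugation identity for $\pi_a$ combined with Remark~\ref{erg_rem}, and so should go through without difficulty.
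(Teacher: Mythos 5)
Your proposal is correct and follows essentially the same route as the paper's own proof: identify $\pi_a(s)$ for $s\in S_0$ with the multiplication operator $M_{a(\cdot,s)}$, use the $w^*$-density hypothesis together with the maximal abelian property of $\Ac$ from Lemma~\ref{masa} to get $\pi_a(S)'\subseteq\Ac'=\Ac$, and then combine the conjugation identity with Remark~\ref{erg_rem} for the ergodicity equivalence. Your explicit appeal to the $\sigma$-weak closedness of commutants just spells out a step the paper leaves implicit.
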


\begin{proof}
Recall that $\Hc=L^2(X,\mu)$ and for any $\psi\in L^\infty(X,\mu)$ we denote by 
$M_\psi\in\Bc(\Hc)$ be the operator of  multiplication by~$\psi$. 
By Lemma~\ref{masa}, the operator algebra 
$$\Ac:=\{M_\psi\mid \psi\in L^\infty(X,\mu)\}\subseteq\Bc(\Hc)$$ 
is a maximal self-adjoint subalgebra of $\Bc(\Hc)$. 
To prove that $\pi_a$ is a multipli\-ci\-ty-free representation, we will show that $\pi_a(S)'\subseteq\Ac$. 
Hence we must prove that if $T\in\Bc(\Hc)$ and $T\pi(s)=\pi(s)T$ for all $s\in S$, then $T\in\Ac$. 
In fact we will prove a stronger fact, 
namely if $T\in\Bc(\Hc)$ and $T\pi(s)=\pi(s)T$ for all $s\in S_0$, then $T\in\Ac$. 

If $s\in S_0$, then it is clear that $j(x,s)=1$ and $\varphi(x.s)=\varphi(x)$ for a.e. $x\in X$, 
where $\varphi\in L^2(X,\mu)$ is arbitrary, 
and it then follows by the definition of $\pi_a$ that 
$\pi_a(s)$ is the operator of multiplication by $a(\cdot,s)\in L^\infty(X,\mu)$. 
Since the set $\{a(\cdot,s)\mid s\in S_0\}$ spans a $w^*$-dense linear subspace of $L^\infty(X,\mu)$ 
by hypothesis, 
it then follows that if $T\in\Bc(\Hc)$ and $T\pi(s)=\pi(s)T$ for all $s\in S_0$, then $T\in\Ac'$. 
We have seen above that $\Ac$ is a maximal self-adjoint subalgebra of $\Bc(\Hc)$, hence $\Ac'=\Ac$, 
and then $T\in\Ac$, as claimed above. 
This completes the proof of the fact that $\pi_a$ is a multiplicity-free representation. 

Moreover, if the representation~$\pi_a$ is irreducible, then the action of $S$ on $(X,\mu)$ is ergodic 
by Proposition~\ref{quasi_prop}\eqref{quasi_prop_item2}. 
Conversely, let us assume that the action of $S$ on $(X,\mu)$ is ergodic. 
In order to prove that the representation $\pi_a$ is irreducible, 
we must show that if $T\in\Bc(\Hc)$ satisfies $T\pi(s)=\pi(s)T$ for all $s\in S$, 
then $T$ is a scalar multiple of the identity operator on~$\Hc$. 
In fact, using the condition $T\pi(s)=\pi(s)T$ for all $s\in S_0$, 
we obtain by the above reasoning that $T=M_\psi$ for some $\psi\in L^\infty(X,\mu)$. 
Then for all $s\in S$ and $\varphi\in L^\infty(X,\mu)$ one has 
$$\begin{aligned}
\psi(x)j(x,s)^{1/2}\varphi(x.s)
&=(M_\psi\pi_a(s)\varphi)(x) \\
&=(T\pi_a(s)\varphi)(x) \\
&=(\pi_a(s)T\varphi)(x) \\
&=(\pi_a(s)M_\psi\varphi)(x) \\
&=j(x,s)^{1/2}\psi(x.s)\varphi(x.s)
\end{aligned}$$
for a.e. $x\in X$. 
This implies that for all $s\in S$ one has 
$\psi(x)=\psi(x.s)$ for a.e. $x\in X$. 
Since $\psi\in L^\infty(X,\mu)$ and the action of $S$ on $(X,\mu)$ is ergodic, 
it then follows that $\psi$ is constant a.e. on $X$, hence the multiplication operator 
$T=M_\psi$ is a scalar multiplication of the identity operator on~$\Hc$, 
and this completes the proof. 
\end{proof}

\begin{rem}
\normalfont
As we will see in the examples presented in the following sections of this paper, 
the group $S_0$ from Theorem~\ref{quasi_th} is an abstract version of 
the Lie subgroup that corresponds to a polarization of a nilpotent Lie algebra. 
More precisely, one can interpret the representation $\pi_a\colon S\to \Bc(L^2(X,\mu))$ 
as the representation induced from the character $\chi_0\colon S_0\to\TT$, $\chi_0(s):=a(x_0,s)$, 
for some fixed $x_0\in X$ (if any)  with $x_0.s=x_0$ for all $s\in S_0$. 
It is worth noting that if there exists such a point $x_0\in X$, 
then the above $\chi_0$ is a group homomorphism because of the cocycle properties of $a$.
\end{rem}

\section{Applications to group actions on locally compact spaces}

The following proposition establishes irreducibility of some unitary representations that 
play a very significant role in \cite{BB09} and \cite{BB10}. 
See Examples~\ref{Ex09}--\ref{Ex10} below for more specific information in this connection. 

\begin{prop}\label{irred_prop}
Let $G$ be a group and $(X,\mu)$ be any locally compact space endowed with a Radon measure. 
Assume that $\alpha\colon G\times X\to X$, $(g,x)\mapsto\alpha_g(x)$, 
is an action of $G$ on $X$ by measure-preserving transformations. 
Let $\Fc$ be any $G$-invariant vector space of real measurable functions on $X$, 
with the corresponding representation $\lambda\colon G\to\End(\Fc)$, $\lambda_g(f):=f\circ\alpha_{g^{-1}}$. 
Assume in addition that 
the linear span of the set $\{\exp(\ie f)\mid f\in\Fc\}$ is $w^*$-dense in $L^\infty(X,\mu)$. 
Then the following conditions are equivalent:
\begin{enumerate}
\item\label{irred_prop_item1} The action $\alpha$ is ergodic. 
\item\label{irred_prop_item2} The unitary representation 
$$\pi\colon \Fc\rtimes_\lambda G\to\Bc(L^2(X,\mu)),\quad 
(\pi(f,g)\varphi)(x)=\ee^{\ie f(x)}\varphi(\alpha_{g^{-1}}(x))$$ 
is irreducible. 
\end{enumerate}
\end{prop}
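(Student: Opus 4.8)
The plan is to realize $\pi$ as a representation $\pi_a$ of the type studied in Definition~\ref{quasi_def} and then invoke Theorem~\ref{quasi_th}. Set $S:=\Fc\rtimes_\lambda G$, whose multiplication is $(f_1,g_1)(f_2,g_2)=(f_1+f_2\circ\alpha_{g_1^{-1}},\,g_1g_2)$, and let $S$ act on the right on $X$ by $\beta_{(f,g)}(x):=\alpha_{g^{-1}}(x)$; a short computation using $\alpha_{g_2^{-1}}\circ\alpha_{g_1^{-1}}=\alpha_{(g_1g_2)^{-1}}$ confirms that this is a genuine right action (here the inverse $g^{-1}$ is exactly what converts the left $G$-action $\alpha$ into a right $S$-action $\beta$). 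Since $\alpha$ is measure-preserving, $\mu$ is quasi-invariant with $j(\cdot,s)\equiv 1$. Define the cocycle $a(x,(f,g)):=\ee^{\ie f(x)}$, which lies in $\TT$ because each $f\in\Fc$ is real-valued. First I would verify the cocycle identity $a(x,s_1s_2)=a(x,s_1)a(x.s_1,s_2)$: expanding both sides reduces it to the additive relation $f_1(x)+f_2(\alpha_{g_1^{-1}}(x))=f_1(x)+(f_2\circ\alpha_{g_1^{-1}})(x)$, which is built into the semidirect-product law. With $j\equiv 1$ and this $a$, the operator $\pi_a(f,g)$ of Definition~\ref{quasi_def} is exactly $(\pi_a(f,g)\varphi)(x)=\ee^{\ie f(x)}\varphi(\alpha_{g^{-1}}(x))=(\pi(f,g)\varphi)(x)$, so $\pi_a=\pi$.

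Next I would check the hypotheses of Theorem~\ref{quasi_th}. Since $(X,\mu)$ is a locally compact space with a Radon measure, the first of the two conditions in Lemma~\ref{masa} holds. To control the subgroup $S_0=\{s\in S\mid x.s=x\text{ for a.e. }x\}$, observe that every element of the form $(f,e)$, with $e$ the identity of $G$, satisfies $\beta_{(f,e)}(x)=\alpha_e(x)=x$ for all $x$, whence $(f,e)\in S_0$ for every $f\in\Fc$. Consequently $\{a(\cdot,s)\mid s\in S_0\}\supseteq\{\ee^{\ie f}\mid f\in\Fc\}=\{\exp(\ie f)\mid f\in\Fc\}$, and the standing hypothesis that the latter spans a $w^*$-dense subspace of $L^\infty(X,\mu)$ gives precisely the spanning condition required by Theorem~\ref{quasi_th}. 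One need not determine $S_0$ completely; the inclusion above already suffices.

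The last link is to reconcile ergodicity of the $S$-action with ergodicity of $\alpha$. Because $\beta_{(f,g)}=\alpha_{g^{-1}}$ and $g\mapsto g^{-1}$ is a bijection of $G$, a measurable set $A\subseteq X$ satisfies $\mu(A\,\triangle\,\beta_s(A))=0$ for all $s\in S$ if and only if $\mu(A\,\triangle\,\alpha_g(A))=0$ for all $g\in G$; equivalently, the $S$-orbits and the $\alpha$-orbits in $X$ coincide. Hence the action of $S$ on $(X,\mu)$ is ergodic if and only if $\alpha$ is ergodic. Combining this with $\pi_a=\pi$, Theorem~\ref{quasi_th} yields the equivalence of \eqref{irred_prop_item1} and \eqref{irred_prop_item2} (and, as a bonus, that $\pi$ is multiplicity-free). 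I expect no serious obstacle: the only point requiring care is the bookkeeping of the semidirect-product convention together with the inverse in $\alpha_{g^{-1}}$, and once these are fixed each verification is routine.
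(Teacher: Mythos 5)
Your proposal is correct and follows essentially the same route as the paper: the paper likewise treats this as a direct specialization of Theorem~\ref{quasi_th} with $S=\Fc\rtimes_\lambda G$, noting that $S_0$ contains $(\Fc,+)$ acting trivially and that ergodicity of the $S$-action reduces to ergodicity of $\alpha$. You simply spell out the cocycle and bookkeeping verifications that the paper leaves implicit.
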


\begin{proof}
This is just a special case of Theorem~\ref{quasi_th},
with $S=\Fc\rtimes_\lambda G$.
Indeed in this case  $S_0=(\Fc,+)$, and it acts trivially on $X$. 
The fact that group action of  $\Fc\rtimes_\lambda G$ on $(X,\mu)$ is ergodic is  
equivalent to ergodicity of the action of $G$ on  $(X,\mu)$, since the action of $\Fc$ on $(X,\mu)$ is trivial. 
\end{proof}

\begin{rem}\label{irred_rem}
\normalfont
In Proposition~\ref{irred_prop} the ergodicity hypothesis is necessary for the representation~$\pi$ to be irreducible, without imposing any condition on the linear span of the set $\{\exp(\ie f)\mid f\in\Fc\}$. 
This follows by Proposition~\ref{quasi_prop}\eqref{quasi_prop_item2}.
\end{rem}

For the transitive group action of a connected simply connected nilpotent Lie group on itself by left translations, 
the following corollary implies that the unitary representations constructed in \cite[Subsect. 2.4]{BB09} 
are irreducible. 
Using suitable global coordinates on coadjoint orbits of nilpotent Lie groups 
and the transitivity of coadjoint action on its orbits, 
this corollary also recovers the result of \cite[Prop. 5.1(2)]{BB10}. 
See Examples \ref{Ex09}--\ref{Ex10} below for more details in this connection. 

\begin{cor}\label{irred_cor}
Let $G$ be a group and $X$ be a finite-dimensional real vector space with a Lebesgue measure~$\mu$. 
Assume that $\alpha\colon G\times X\to X$, $(g,x)\mapsto\alpha_g(x)$, 
is an action of $G$ on $X$ by measure-preserving transformations. 
Let $\Fc$ be any $G$-invariant vector space of real measurable functions on $X$, 
with the corresponding representation $\lambda\colon G\to\End(\Fc)$, $\lambda_g(f):=f\circ\alpha_{g^{-1}}$, 
and define  
the unitary representation 
$$\pi\colon \Fc\rtimes_\lambda G\to\Bc(L^2(X,\mu)),\quad 
(\pi(f,g)\varphi)(x)=\ee^{\ie f(x)}\varphi(\alpha_{g^{-1}}(x)).$$ 
If the linear dual space of $X$ satisfies $X^*\subseteq\Fc$, then the following conditions are equivalent:  
\begin{enumerate}[(i)]
\item The action $\alpha$ is ergodic. 
\item The representation $\pi$ is irreducible. 
\end{enumerate}
\end{cor}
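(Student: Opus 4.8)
The plan is to deduce this corollary directly from Proposition~\ref{irred_prop} by verifying its single nontrivial hypothesis, namely that the linear span of $\{\exp(\ie f)\mid f\in\Fc\}$ is $w^*$-dense in $L^\infty(X,\mu)$. Since $X$ is a finite-dimensional real vector space equipped with Lebesgue measure, it is a locally compact space carrying a Radon measure, so the remaining hypotheses of Proposition~\ref{irred_prop} hold verbatim; once the density is checked, the asserted equivalence follows at once.

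First I would observe that the hypothesis $X^*\subseteq\Fc$ guarantees that for every linear functional $\xi\in X^*$ the function $x\mapsto\xi(x)$ belongs to $\Fc$, hence the character $x\mapsto\exp(\ie\,\xi(x))$ belongs to the set $\{\exp(\ie f)\mid f\in\Fc\}$. Consequently it suffices to prove that the linear span $V$ of the family of characters $\{\exp(\ie\,\xi(\cdot))\mid\xi\in X^*\}$ is already $w^*$-dense in $L^\infty(X,\mu)$.

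The key step is this density argument, and the main point is the injectivity of the Fourier transform. Recall that $L^\infty(X,\mu)$ is the dual of $L^1(X,\mu)$, so by the bipolar theorem the subspace $V$ is $w^*$-dense if and only if its pre-annihilator in $L^1(X,\mu)$ is trivial. Thus I must show that if $h\in L^1(X,\mu)$ satisfies
$$\int\limits_X \exp(\ie\,\xi(x))\,h(x)\,\de\mu(x)=0\quad\text{for all }\xi\in X^*,$$
then $h=0$ a.e. But the left-hand side is precisely the value at $\xi$ of the Fourier transform of $h$, so the displayed condition says that $\widehat{h}$ vanishes identically; by the uniqueness theorem for the Fourier transform on $L^1$ of a finite-dimensional real vector space this forces $h=0$ a.e. Hence $V$, and a fortiori the larger span appearing in the statement, is $w^*$-dense in $L^\infty(X,\mu)$.

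With this density established, all hypotheses of Proposition~\ref{irred_prop} are satisfied, and the equivalence of ergodicity of $\alpha$ with irreducibility of $\pi$ is immediate. The only mildly delicate point is the Fourier-analytic density argument above; everything else is a direct specialization of the already-proven proposition.
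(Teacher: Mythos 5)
Your proposal is correct and follows essentially the same route as the paper: both reduce the corollary to Proposition~\ref{irred_prop} by proving that the span of the characters $\{\exp(\ie\,\xi(\cdot))\mid\xi\in X^*\}$ is $w^*$-dense in $L^\infty(X,\mu)$, via the $L^1$--$L^\infty$ duality and the injectivity of the Fourier transform. The only cosmetic difference is that the paper dispatches the implication (ii)$\Rightarrow$(i) separately through Remark~\ref{irred_rem} before invoking the density argument for the converse, whereas you obtain both directions at once from the proposition.
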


\begin{proof}
If the representation $\pi$ is irreducible, then $\alpha$ is ergodic by Remark~\ref{irred_rem}. 

Conversely, the result will follow by Proposition~\ref{irred_prop} as soon as we will have proved that 
the linear span of the set $\{\exp(\ie \xi)\mid \xi\in X^*\}$ is $w^*$-dense in $L^\infty(X,\mu)$. 
To check this, recall that the predual of the von Neumann algebra $L^\infty(X,\mu)$ 
is $L^1(X,\mu)$ and the corresponding duality pairing is 
$$L^\infty(X,\mu)\times L^1(X,\mu)\to\CC,\quad 
(\varphi,\psi)\mapsto\langle\varphi,\psi\rangle:=\int\limits_X\varphi\psi\de\mu.$$
On the other hand, if $\psi\in L^1(X,\mu)$ and 
$0=\langle\exp(\ie \xi),\psi\rangle=\int\limits_X\exp(\ie \xi)\psi\de\mu$ for all $\xi\in X^*$, 
then $\psi=0$ by the injectivity property of the Fourier transform. 
It then follows by the Hahn-Banach theorem that indeed 
the linear span of the set $\{\exp(\ie \xi)\mid \xi\in X^*\}$ is $w^*$-dense in $L^\infty(X,\mu)$, 
and this completes the proof. 
\end{proof}

\begin{ex}[{\cite[Subsect. 2.4]{BB09}}]\label{Ex09}
\normalfont
Let $G$ be any connected, simply connected, nilpotent Lie group with some fixed left invariant Haar measure, 
and $\Fc\subseteq\Ci(G)$ be a linear subspace 
of the space of smooth functions on~$G$. 
satisfying the following conditions: 
\begin{enumerate}
\item\label{orbit0_item1}
The linear space $\Fc$ is invariant under the representation of $G$ by left translations, 
$\lambda\colon G\to\End(\Ci(G))$, $(\lambda_g\phi)(x)=\phi(g^{-1}x)$. 
We denote again by $\lambda\colon G\to\End(\Fc)$ the restriction to $\Fc$ 
of the above representation $\lambda$ of~$G$. 
\item\label{orbit0_item3}
The mapping $G\times\Fc\to\Fc$, $(g,\phi)\mapsto\lambda_g\phi$ is continuous. 
\item\label{orbit0_item5} 
We have $\gg^*\subseteq\{\phi\circ\exp_G\mid\phi\in\Fc\}$. 
\end{enumerate}
We define 
$\pi\colon \Fc\rtimes G\to \Bc(L^2(G))$ by 
$$(\pi(\phi,g)f)(x)
=\ee^{\ie\phi(x)}f(g^{-1}x)
$$ 
for all $\phi\in\Fc$, $g\in G$, and $f\in L^2(G)$, and almost all $x\in G$. 

Hence $\pi$ is as in Proposition~\ref{irred_prop}. 
In order to apply that proposition we must check that 
the linear span of the set $\{\exp(\ie \phi)\mid \phi\in\Fc\}$ is $w^*$-dense in $L^\infty(G)$, 
hence that if $\psi\in L^1(G)$ and $\int\limits_G\psi \exp(\ie \phi)=0$ for all $\phi\in\Fc$, 
then necessarily $\psi=0$. 
To this end, using the above condition for $\phi=\xi\circ\log_G$ with arbitrary $\xi\in\gg^*$ 
(note that $\phi\in\Fc$ by the hypothesis \ref{orbit0_item5}), 
we obtain that the Fourier transform of $\psi$ is zero, hence $\psi=0$. 
Finally, the right action of $\Fc\rtimes G$ on $G$ given by 
$$G\times(\Fc\rtimes G)\to G,\quad (x,(g,\phi))\mapsto g^{-1}x$$
is transitive, hence ergodic (see Remark~\ref{erg_rem}), 
and then by Proposition~\ref{irred_prop} the representation $\pi$ is irreducible. 

Let us also note that the above hypotheses on $\Fc$ ensure that $\Fc$ is a admissible function space 
in the sense of \cite[Def. 2.8]{BB09}. 
\end{ex}

\begin{ex}[{\cite[Prop. 5.1(2)]{BB10}}]\label{Ex10}
\normalfont
Let $G$ be any connected, simply connected, nilpotent Lie group, with its center $Z$ and 
the corresponding Lie algebras ${\mathfrak z}\subseteq{\mathfrak g}$. 
Endow the coadjoint orbit~${\mathcal O}$ with its Liouville measure and define 
$$\widetilde{\pi}\colon G\ltimes_{{\rm Ad}}{\mathfrak g}\to{\mathcal B}(L^2({\mathcal O})),\quad 
(\widetilde{\pi}(g,Y)f)(\xi)={\rm e}^{{\rm i} {\langle\xi,Y\rangle}}f({\rm Ad}^*_G(g^{-1})\xi).$$
Then the following assertions hold: 
\begin{enumerate}[(i)]
\item\label{double_item1} 
The group $\widetilde{G}:=G\ltimes_{{\rm Ad}}{\mathfrak g}$ 
is nilpotent and its center is $Z\times{\mathfrak z}$. 
\item\label{double_item2} 
$\widetilde{\pi}$ is a unitary irreducible representation 
of $\widetilde{G}$. 
\end{enumerate}

We recall that the multiplication in the semi-direct product group $\widetilde{G}$ 
is given by 
\begin{equation}\label{double_eq00}
(g_1,Y_1)\cdot(g_2,Y_2)=(g_1g_2,Y_1+{\rm Ad}_G(g_1)Y_2)
\end{equation}
and the bracket in the corresponding Lie algebra 
$\widetilde{{\mathfrak g}}={\mathfrak g}\ltimes_{{\rm ad}}{\mathfrak g}$ is 
defined by 
\begin{equation*}
[(X_1,Y_1),(X_2,Y_2)]=([X_1,X_2],[X_1,Y_2]-[X_2,Y_1]).
\end{equation*}
An inspection of these equations shows that $\widetilde{{\mathfrak g}}$ 
is a nilpotent Lie algebra with its center ${\mathfrak z}\times{\mathfrak z}$. 

To see that $\widetilde{\pi}$ is a representation 
we need to check that the function 
$$ a\colon {\mathcal O}\times \widetilde{G}\to \TT,\quad 
a(\xi,(g,Y)):={\rm e}^{{\rm i} {\langle\xi,Y\rangle}}$$
is a cocycle in the sense of Definition~\ref{quasi_def}. 
In fact, using the right action of $\widetilde{G}$ on $\Oc$ given by 
\begin{equation}\label{double_eq01}
\Oc\times\widetilde{G}\to\Oc,\quad 
(\xi,(g,Y))\mapsto \xi\circ \Ad_G(g)=\Ad_G^*(g^{-1})\xi, 
\end{equation} 
it follows by \eqref{double_eq00} and the above definition of $a$ that 
$$\begin{aligned}
a(\xi,(g_1,Y_1)(g_2,Y_2))
&=a(\xi,(g_1g_2,Y_1+{\rm Ad}_G(g_1)Y_2)) \\
&= {\rm e}^{{\rm i}{\langle\xi,Y_1+{\rm Ad}_G(g_1)Y_2\rangle}} \\
&={\rm e}^{{\rm i}{\langle \xi,Y_1\rangle}}
{\rm e}^{{\rm i}{\langle\xi\circ\Ad_G(g_1),Y_2\rangle}} \\
&=a(\xi,(g_1,Y_1))a(\xi.(g_1,Y_1),(g_2,Y_2)).
\end{aligned}$$
The property $a(\xi,\1)=\xi$ for al $\xi\in\Oc$ is clear from the definition of $a$. 
Also note that 
the Liouville measure on ${\mathcal O}$ is invariant under the group action~\eqref{double_eq01}. 
It then follows by Proposition~\ref{quasi_prop} that $\widetilde{\pi}$ is a continuous unitary representation. 

Moreover, to see that $\widetilde{\pi}$ is irreducible we will use Corollary~\ref{irred_cor}.  
To this end, first note that the group action~\eqref{double_eq01} is transitive, hence ergodic (see Remark~\ref{erg_rem}). 
Furthermore, recall that the mapping 
$${\mathcal O}\to\gg_e^*,\quad 
\xi\to\xi\vert_{\gg_e} $$
is a global chart which takes the Liouville measure of ${\mathcal O}$ 
to a Lebesgue measure on~$\gg_e^*$, 
where $e$ is the jump index set of $\Oc$ with respect to some Jordan-H\"older basis in~$\gg$  
(see for instance \cite{BB10}). 
Then we can use the Fourier transform to see that the linear subspace generated by $\{{\rm e}^{{\rm i}{\langle Y,\cdot\rangle}}\mid Y\in{\mathfrak g}\}$ 
is weak$^*$-dense in $L^\infty({\mathcal O})$ ($\simeq L^1({\mathcal O})^*$). 
Therefore we can use Corollary~\ref{irred_cor} to obtain that $\widetilde{\pi}$ is irreducible.

In addition to the above properties of $\widetilde{\pi}$ we also recall some additional information 
on the irreducible representation $\widetilde{\pi}$ that was obtained in \cite[Prop. 5.1(2)]{BB10}. 
Firstly, the space of smooth vectors for the representation $\widetilde{\pi}$ 
is ${\mathcal S}({\mathcal O})$. 
Moreover, 
select any Jordan-H\"older basis $X_1,\dots,X_n$ in $\gg$ 
and define 
$$\widetilde{X}_j=
\begin{cases}
\hfill (0,X_j) &\text{ for }j=1,\dots,n,\\
(X_{j-n},0) &\text{ for }j=n+1,\dots,2n.
\end{cases} 
$$
Then $\widetilde{X}_1,\dots,\widetilde{X}_{2n}$ is a Jordan-H\"older basis 
in $\widetilde{{\mathfrak g}}$ and the corresponding predual for 
the coadjoint orbit $\widetilde{{\mathcal O}}\subseteq\widetilde{{\mathfrak g}}^*$ associated with 
the representation $\widetilde{\pi}$ is $$\widetilde{{\mathfrak g}}_{\widetilde{e}}={\mathfrak g}_e\times{\mathfrak g}_e\subseteq\widetilde{{\mathfrak g}},$$
where $\widetilde{e}$ is the set of jump indices for~$\widetilde{{\mathcal O}}$. 
\end{ex}

\section{Application to Gaussian measures on Hilbert spaces} 

We first recall here a few facts from \cite{BB10c} and \cite{BBM15}. 

\begin{defn}\label{heisenberg}
\normalfont
If $\Vc$ is a real Hilbert space, $A\in\Bc(\Vc)$ with 
$(Ax\mid y)=(x\mid Ay)$ for all $x,y\in\Vc$, 
and moreover $\Ker A=\{0\}$, then  
the \emph{Heisenberg algebra} associated with the pair $(\Vc,A)$ is  
the real Hilbert space $\hg(\Vc,A)=\Vc\dotplus\Vc\dotplus\RR$ endowed with the Lie bracket  
defined by 
$[(x_1,y_1,t_1),(x_2,y_2,t_2)]=(0,0,(Ax_1\mid y_2)-(Ax_2\mid y_1))$.   
The corresponding \emph{Heisenberg group} $\HH(\Vc,A)=(\hg(\Vc,A),\ast)$ 
is the Lie group whose underlying manifold is $\hg(\Vc,A)$ and 
whose multiplication is defined by 
$$(x_1,y_1,t_1)\ast(x_2,y_2,t_2)=
(x_1+x_2,y_1+y_2,t_1+t_2+((Ax_1\mid y_2)-(Ax_2\mid y_1))/2) $$
for $(x_1,y_1,t_1),(x_2,y_2,t_2)\in\HH(\Vc,A)$. 
\end{defn}

Let $\Vc_{-}$ be a real Hil\-bert space and $(\cdot\mid\cdot)_{-}$ be its scalar product. 
For every $a\in\Vc_{-}$ and 
every symmetric, nonnegative, injective, \emph{trace-class} operator $K$ on $\Vc_{-}$ 
there is a unique probability Borel measure~$\gamma$ on $\Vc_{-}$
with  
$$(\forall x\in\Vc_{-})\quad 
\int\limits_{\Vc_{-}}\ee^{\ie(x\mid y)_{-}}\de\gamma(y)
=\ee^{\ie(a\mid x)_{-}-\frac{1}{2}(Kx\mid x)_{-}} $$
and $\gamma$ is called the  
\emph{Gaussian measure with the mean $a$ and the variance $K$}.  

Now assume that 
$a=0$  
and let $\Vc_{+}:=\Ran K$ and $\Vc_0:=\Ran K^{1/2}$ be endowed 
with the scalar products given by $(Kx\mid Ky)_{+}:=(x\mid y)_{-}$ and 
$(K^{1/2}x\mid K^{1/2}y)_0:=(x\mid y)_{-}$, respectively, for all $x,y\in\Vc_{-}$, 
which turn the linear bijections 
$K\colon\Vc_{-}\to\Vc_{+}$ and $K^{1/2}\colon\Vc_{-}\to\Vc_0$ 
into isometries. 
We thus obtain the real Hilbert spaces
$$\Vc_{+}\hookrightarrow\Vc_0\hookrightarrow\Vc_{-} $$
where the inclusion maps are Hilbert-Schmidt operators, 
since $K^{1/2}\in\Bc(\Vc_{-})$ is a Hilbert-Schmidt operator. 
Also, the scalar product of $\Vc_0$ extends to a duality pairing $(\cdot\mid\cdot)_0\colon\Vc_{-}\times\Vc_{+}\to\RR$. 

We also recall that for every $x\in\Vc_{+}$ the translated measure $\de\gamma(-x+\cdot)$ 
is absolutely continuous with respect to $\de\gamma(\cdot)$ 
and we have the Cameron-Martin formula
$$\de\gamma(-x+\cdot)=\rho_x(\cdot)\de\gamma(\cdot) 
\quad\text{ with }\rho_x(\cdot)=\ee^{(\cdot\mid x)_0-\frac{1}{2}(x\mid x)_0}.$$  

\begin{defn}\label{sch_def}
\normalfont 
Let $\Vc_{+}$ be a real Hilbert space with the scalar product denoted by $(x,y)\mapsto (x\mid y)_{+}$. 
Also let $A\colon\Vc_{+}\to\Vc_{+}$ be a nonnegative, symmetric, injective, trace-class operator.  
Let $\Vc_0$ and $\Vc_{-}$ be the completions of $\Vc_{+}$ with respect to the scalar products  
$$(x,y)\mapsto (x\mid y)_0:=(A^{1/2}x\mid A^{1/2}y)_{+}$$ 
and 
$$(x,y)\mapsto (x\mid y)_{-}:=(Ax\mid Ay)_{+},$$ 
respectively.  
Then the operator $A$ uniquely extends to a nonnegative, symmetric, injective, trace-class operator $K\in\Bc(\Vc_{-})$,   
hence by the above observations one obtains the Gaussian measure $\gamma$ on $\Vc_{-}$ with variance $K$ and mean~$0$. 

One can also construct the Heisenberg group $\HH(\Vc_{+},A)$. 
The \emph{Schr\"odinger representation} 
$\pi\colon\HH(\Vc_{+},A)\to\Bc(L^2(\Vc_{-},\gamma))$ is defined by 
$$\pi(x,y,t)\phi=\rho_x(\cdot)^{1/2}
\ee^{\ie(t+(\cdot\mid y)_0+\frac{1}{2}(x\mid y)_0)}\phi(-x+\cdot) $$
for $(x,y,t)\in\HH(\Vc_{+},A)$ and $\phi\in L^2(\Vc_{-},\gamma)$. 
\end{defn}

\begin{prop}
\label{sch_irred}
The representation 
$\pi\colon\HH(\Vc_{+},A)\to\Bc(L^2(\Vc_{-},\gamma))$ 
from Definition~\ref{sch_def} is irreducible. 
\end{prop}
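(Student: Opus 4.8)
The plan is to exhibit the Schr\"odinger representation $\pi$ as a representation $\pi_a$ attached to a measurable dynamical system in the sense of Definition~\ref{quasi_def}, and then to invoke Theorem~\ref{quasi_th}. Concretely, I would take $X=\Vc_{-}$ with the Gaussian probability measure $\mu=\gamma$, the group $S=\HH(\Vc_{+},A)$, the right action
$$\beta\colon\Vc_{-}\times\HH(\Vc_{+},A)\to\Vc_{-},\quad \beta_{(x,y,t)}(\xi)=\xi-x,$$
and the cocycle read off from the definition of $\pi$, namely
$$a(\xi,(x,y,t))=\ee^{\ie(t+(\xi\mid y)_0+\frac12(x\mid y)_0)}.$$
The Cameron--Martin formula identifies the Radon--Nikodym factor as $j(\cdot,(x,y,t))=\rho_x$, so that $j^{1/2}=\rho_x^{1/2}$ is exactly the first factor in the definition of $\pi$; matching the remaining factors gives $\pi=\pi_a$. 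That $a$ is a cocycle (hence that $\pi$ is a representation) is then a direct computation from the multiplication law of $\HH(\Vc_{+},A)$ together with the identity $(Ax\mid y)_{+}=(x\mid y)_0$. Since $\gamma$ is a probability measure and $L^2(\Vc_{-},\gamma)$ is separable, the pair $(\Vc_{-},\gamma)$ satisfies the second condition of Lemma~\ref{masa}, so Theorem~\ref{quasi_th} is applicable even though $\Vc_{-}$ is not locally compact.

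Next I would compute the data appearing in Theorem~\ref{quasi_th}. The element $(x,y,t)$ acts trivially on $\Vc_{-}$ precisely when $x=0$, so
$$S_0=\{(0,y,t)\mid y\in\Vc_{+},\ t\in\RR\},$$
and on $S_0$ the cocycle reduces to $a(\cdot,(0,y,t))=\ee^{\ie t}\ee^{\ie(\cdot\mid y)_0}$. Hence the linear span of $\{a(\cdot,s)\mid s\in S_0\}$ coincides with the span of $\{\ee^{\ie(\cdot\mid y)_0}\mid y\in\Vc_{+}\}$. To verify the $w^*$-density hypothesis I would argue exactly as in Corollary~\ref{irred_cor}: using the duality $L^\infty(\Vc_{-},\gamma)=L^1(\Vc_{-},\gamma)^*$ and the Hahn--Banach theorem, it suffices to show that if $h\in L^1(\Vc_{-},\gamma)$ satisfies $\int_{\Vc_{-}}\ee^{\ie(\xi\mid y)_0}h(\xi)\,\de\gamma(\xi)=0$ for all $y\in\Vc_{+}$, then $h=0$. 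Since $\Vc_{+}=\Ran K$ is dense in $\Vc_{-}$, this is the injectivity of the Fourier transform of the finite measure $h\,\gamma$ on the Hilbert space $\Vc_{-}$.

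It then remains to translate the equivalence in Theorem~\ref{quasi_th} into a statement about $\gamma$. Because $S_0$ acts trivially and the action of $(x,y,t)$ depends only on $x$, the action of $\HH(\Vc_{+},A)$ on $(\Vc_{-},\gamma)$ is ergodic if and only if the translation action of the subspace $\Vc_{+}$ on $(\Vc_{-},\gamma)$ is ergodic. I would establish this ergodicity as the decisive step: $\Vc_{+}$ is a dense subspace contained in the Cameron--Martin space of $\gamma$, and translations by Cameron--Martin directions act ergodically on a nondegenerate Gaussian measure. This is classical and can be taken from \cite{BB10c} and \cite{BBM15}; granting it, Theorem~\ref{quasi_th} yields that $\pi_a=\pi$ is irreducible.

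The main obstacle is precisely this ergodicity input. Unlike the finite-dimensional applications, here $\gamma$ is only quasi-invariant and the translating subspace $\Vc_{+}$ has $\gamma$-measure zero, so neither transitivity (as in Remark~\ref{erg_rem}) nor any elementary argument applies; one must appeal to a genuine ergodicity theorem for Gaussian measures, equivalently to the statement that the only $\Vc_{+}$-translation-invariant functions in $L^\infty(\Vc_{-},\gamma)$ are the constants. The $w^*$-density step carries a milder version of the same infinite-dimensional difficulty, requiring the uniqueness theorem for characteristic functionals of finite measures on $\Vc_{-}$ rather than the elementary Fourier inversion used in Corollary~\ref{irred_cor}.
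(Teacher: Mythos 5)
Your proposal is correct in substance but runs in the opposite logical direction from the paper. The paper does not prove Proposition~\ref{sch_irred} internally at all: its ``proof'' is a citation to \cite[Rem. 3.6]{BB10c} and \cite{BBM15}, and the irreducibility is then \emph{used}, via Proposition~\ref{quasi_prop}\eqref{quasi_prop_item2}, to \emph{deduce} the ergodicity of the $\Vc_{+}$-translation action on $(\Vc_{-},\gamma)$ in the corollary that follows. You instead derive irreducibility from ergodicity by running Theorem~\ref{quasi_th}: you realize $\pi$ as $\pi_a$ for the system $(\Vc_{-},\gamma)$ with $S=\HH(\Vc_{+},A)$ (correctly noting that Lemma~\ref{masa}(2), not (1), is what makes the theorem applicable here), identify $S_0=\{(0,y,t)\}$, and check the $w^*$-density of the span of $\{\ee^{\ie(\cdot\mid y)_0}\mid y\in\Vc_{+}\}$ via uniqueness of characteristic functionals --- note that since $(\cdot\mid Kz)_0=(\cdot\mid z)_{-}$ and $K\colon\Vc_{-}\to\Vc_{+}$ is a bijection, these exponentials realize \emph{all} continuous characters of $\Vc_{-}$, which is the cleanest way to see the density (your appeal to mere denseness of $\Vc_{+}$ in $\Vc_{-}$ is slightly imprecise). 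What your route buys is a proof that actually exercises the paper's main theorem in the infinite-dimensional setting; what it costs is the external input of ergodicity of $\gamma$ under translations by a dense subspace of the Cameron--Martin space $\Vc_0\supseteq\Vc_{+}$. Two cautions. First, do not source that ergodicity from \cite{BB10c} or \cite{BBM15}: those are precisely the references from which the paper imports the irreducibility, and the paper obtains the ergodicity \emph{from} the irreducibility, so your argument would become circular; cite instead a standard Gaussian-measure reference (zero--one law, or the Wiener chaos decomposition argument). Second, your ``direct computation'' of the cocycle identity needs care with signs: with the action $\xi.(x,y,t)=\xi-x$ and the cocycle exactly as displayed, the identity $a(\xi,s_1s_2)=a(\xi,s_1)a(\xi.s_1,s_2)$ fails by a term $2(x_1\mid y_2)_0$ in the exponent; this is an inconsistency of conventions already present in the source (the same formulas appear in the paper's corollary), but your write-up should fix a consistent sign convention before claiming the computation closes.
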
 

\begin{proof}
See for instance from \cite[Rem. 3.6]{BB10c} or \cite{BBM15}.
\end{proof}

\begin{cor}
In the above setting, the action by translations of $\Vc_{+}$ on $(\Vc_{-},\gamma)$ is ergodic. 
\end{cor}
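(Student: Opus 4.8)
The plan is to exhibit the Schr\"odinger representation $\pi$ as the representation $\pi_a$ attached, in the sense of Definition~\ref{quasi_def}, to a measurable dynamical system to which Theorem~\ref{quasi_th} applies, and then to read off ergodicity from the irreducibility already recorded in Proposition~\ref{sch_irred}. Concretely, I would take $(X,\mu)=(\Vc_{-},\gamma)$ and $S=\HH(\Vc_{+},A)$ acting on the right by
$$v.(x,y,t)=-x+v,$$
so that the motion of points depends only on the $\Vc_{+}$-component and is nothing but translation by $\Vc_{+}$. Comparing the formula in Definition~\ref{sch_def} with that in Definition~\ref{quasi_def}, the Radon--Nikodym density is the Cameron--Martin density $j(v,(x,y,t))=\rho_x(v)$ and the scalar cocycle is
$$a(v,(x,y,t))=\ee^{\ie(t+(v\mid y)_0+\frac12(x\mid y)_0)},$$
so that indeed $\pi=\pi_a$. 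Since $\gamma$ is a probability measure and $\Vc_{-}$ is separable (trace-class $A$ forces $\Vc_{+}$, and hence its completion $\Vc_{-}$, to be separable), the space $L^2(\Vc_{-},\gamma)$ is separable and $\mu(X)<\infty$, so $(X,\mu)$ satisfies condition~(2) of Lemma~\ref{masa}.

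Next I would identify $S_0$ and verify the spanning hypothesis of Theorem~\ref{quasi_th}. Since $v.(x,y,t)=v$ for a.e.\ $v$ precisely when $x=0$, one has $S_0=\{(0,y,t)\mid y\in\Vc_{+},\ t\in\RR\}$, and for such elements
$$a(v,(0,y,t))=\ee^{\ie t}\,\ee^{\ie(v\mid y)_0}.$$
The scalar factors $\ee^{\ie t}$ play no role in the linear span, so it remains to show that $\{\ee^{\ie(\cdot\mid y)_0}\mid y\in\Vc_{+}\}$ spans a $w^*$-dense subspace of $L^\infty(\Vc_{-},\gamma)$. By the duality with $L^1(\Vc_{-},\gamma)$ together with the Hahn--Banach theorem, this is equivalent to the injectivity statement: if $\psi\in L^1(\Vc_{-},\gamma)$ satisfies $\int_{\Vc_{-}}\ee^{\ie(v\mid y)_0}\psi(v)\,\de\gamma(v)=0$ for all $y\in\Vc_{+}$, then $\psi=0$.

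This injectivity is the main obstacle, being an infinite-dimensional Fourier uniqueness assertion rather than a formal computation. I would prove it by regarding $\psi\,\de\gamma$ as a finite complex Borel measure on the separable Hilbert space $\Vc_{-}$ whose characteristic functional vanishes on the family of continuous linear functionals $\{(\cdot\mid y)_0\mid y\in\Vc_{+}\}$. Because $\Vc_{+}$ is dense in $\Vc_{-}$ and the pairing $(\cdot\mid\cdot)_0\colon\Vc_{-}\times\Vc_{+}\to\RR$ is non-degenerate, this family separates the points of $\Vc_{-}$; selecting a countable separating subfamily (possible by separability), the associated cylinder sets generate the Borel $\sigma$-algebra, so the vanishing of the characteristic functional forces the measure to vanish on cylinder sets and hence identically, giving $\psi=0$. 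This is exactly the Fourier-uniqueness input available from \cite{BB10c,BBM15}. With all hypotheses of Theorem~\ref{quasi_th} in hand, Proposition~\ref{sch_irred} shows $\pi=\pi_a$ is irreducible, so the theorem yields that the action of $S=\HH(\Vc_{+},A)$ on $(\Vc_{-},\gamma)$ is ergodic. Finally, since points are moved only through the $\Vc_{+}$-component, a measurable set is $S$-invariant modulo $\gamma$-null sets exactly when it is invariant under translations by $\Vc_{+}$; hence the translation action of $\Vc_{+}$ on $(\Vc_{-},\gamma)$ is ergodic, as claimed.
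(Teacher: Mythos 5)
Your proof is correct, and its core is the same as the paper's: identify the Schr\"odinger representation with the representation $\pi_a$ of the measurable dynamical system $\Vc_{-}\times\HH(\Vc_{+},A)\to\Vc_{-}$, $(v,(x,y,t))\mapsto -x+v$, with cocycle $a(\cdot,(x,y,t))=\ee^{\ie(t+(\cdot\mid y)_0+\frac12(x\mid y)_0)}$, and then pass from the irreducibility recorded in Proposition~\ref{sch_irred} to ergodicity. The difference is that you route this through Theorem~\ref{quasi_th}, which obliges you to verify the hypotheses of Lemma~\ref{masa} (separability, finiteness of $\gamma$), to identify $S_0=\{(0,y,t)\}$, and to prove the $w^*$-density of the span of $\{\ee^{\ie(\cdot\mid y)_0}\mid y\in\Vc_{+}\}$ via an infinite-dimensional Fourier-uniqueness argument. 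The paper instead invokes only Proposition~\ref{quasi_prop}\eqref{quasi_prop_item2}, i.e.\ the unconditional implication ``$\pi_a$ irreducible $\Rightarrow$ the action is ergodic,'' which needs none of those hypotheses; the direction of Theorem~\ref{quasi_th} you actually use is exactly that implication. So your extra verifications are sound (and the cylinder-set argument for Fourier uniqueness on the separable Hilbert space $\Vc_{-}$ is the right one), but they are not needed for this corollary; what they buy, if you keep them, is the stronger conclusion that $\pi$ is multiplicity-free and that irreducibility and ergodicity are genuinely \emph{equivalent} in this setting, rather than the one implication the corollary requires.
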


\begin{proof} 
In the present framework, 
the representation $\pi$ is the unitary representation associated to the measure space $(\Vc_{-},\gamma)$ 
acted on by the additive group $(\Vc_{+},+)$ by translations.  
The cocycle of that measurable dynamical system which gives rise to the representation $\pi$ is given by 
$$a(\cdot,(x,y,t))=\ee^{\ie(t+(\cdot\mid y)_0+\frac{1}{2}(x\mid y)_0)}$$
for all $(x,y,t)\in\HH(\Vc_{+},A)$. 
The conclusion follows by Propositions \ref{sch_irred} and \ref{quasi_prop}\eqref{quasi_prop_item2} 
for the right group action 
$$\Vc_{-}\times\HH(\Vc_{+},A)\to \Vc_{-},\quad (v,(x,y,t))\mapsto -x+v$$
and we are done.  
\end{proof}



\end{document}